\newtheorem{theorem}{Theorem}[section]
\newtheorem{lemma}[theorem]{Lemma}
\DeclareMathOperator{\Ker}{Ker}
\DeclareMathOperator{\Hom}{Hom}
\DeclareMathOperator{\Ann}{Ann}
\DeclareMathOperator{\Ass}{Ass}
\DeclareMathOperator{\Var}{Var}
\DeclareMathOperator{\Rad}{Rad}
\DeclareMathOperator{\Att}{Att}
\DeclareMathOperator{\Spec}{Spec}
\DeclareMathOperator{\p}{\frak p}
\DeclareMathOperator{\q}{\frak q}
\DeclareMathOperator{\m}{\frak m}
\DeclareMathOperator{\R}{\widehat R}
\begin{document}
\large
\centerline{\Large {\bf ATTACHED PRIMES OF  LOCAL COHOMOLOGY MODULES }}
\medskip
\centerline{\Large {\bf UNDER LOCALIZATION AND COMPLETION }}

\vskip 0.7cm

\centerline { LE THANH NHAN }
\centerline{Thai Nguyen College of Sciences}
\centerline{Thai Nguyen University, Thai Nguyen, Vietnam}
\centerline{E-mail: trtrnhan@yahoo.com}
\medskip
\centerline {PHAM HUNG QUY}
\centerline{Department of Mathematics,  FPT University}
\centerline{8 Ton That Thuyet Road, Hanoi, Vietnam}
\centerline{E-mail:quyph@fpt.edu.vn}

\vskip 1cm

\noindent{\bf Abstract} {\footnote{ {\it{Key words and phrases: }} Universally catenary rings whose formal fibers are Cohen-Macaulay, attached primes of  local cohomology modules. \hfill\break
  {\it{2010 Subject  Classification: }} 13D45, 13H10, 13E05.\hfill\break {The authors are supported by the Vietnam National Foundation for Science and Technology Development (Nafosted). This paper was written while the authors visited Vietnam  Institute for advanced study in Mathematics (VIASM), they would like to thank VIASM  for the very kind support and hospitality.}}. Let $(R,\m)$ be a Noetherian local ring and $M$ a finitely generated $R$-module. Following I. G. Macdonald \cite{Mac},  the set of all attached primes of the Artinian local cohomology module  $H^i_{\m}(M)$ is denoted by $\Att_R(H^i_{\m}(M))$.  In \cite[Theorem 3.7]{Sh}, R. Y. Sharp proved that if $R$ is a quotient of a Gorenstein local ring then the shifted localization principle always holds true, i.e.   
$$ \ \ \ \ \ \ \ \ \ \ \ \ \ \ \ \ \ \ \ \ \ \ \ \ \Att_{R_{\p}}\big(H^{i-\dim (R/\p)}_{\p R_{\p}}(M_{\p})\big)=\big\{\q R_{\p}\mid \q\in\Att_RH^i_{\m}(M), \q\subseteq \p\big\}  \ \ \ \ \ \ \ \ \ \ \ \ \ \ \ \ \ \ \ \ \   (1)$$
for any local cohomology modules  $H^i_{\m}(M)$ and any $\p\in\Spec (R).$ In this paper, we improve Sharp's result as follows:  the shifted localization principle always holds true if and only if $R$ is universally catenary and all its formal fibers  are  Cohen-Macaulay, if and only if   
$$\ \ \ \ \ \ \ \ \ \ \ \ \ \ \ \ \ \ \ \ \ \ \ \ \ \ \ \ \ \ \ \ \ \ \ \ \ \ \  \displaystyle \Att_{\R}(H^i_{\m}(M))=\bigcup_{\p\in\Att_R(H^i_{\m}(M))}\Ass_{\R}(\R/\p\R)\ \ \ \ \ \ \ \ \ \ \ \ \ \ \ \ \ \ \ \ \ \ \ \ \ \ \ \ \ \ \ \ \ \ \ \ \ (2)$$
holds true for  any finitely generated $R$-module $M$ and any integer $i\geq 0.$ This also improves the main result of the paper \cite{CN}.  
 
 \section{Introduction}

 Throughout this paper, let $(R,\m )$ be a Noetherian local ring and $M$ a finitely generated $R$-module with $\dim M=d$. It is well kwown that $$\Ass_{R_{\p}}(M_{\p})=\big\{\q R_{\p}\mid  \q\in\Ass_RM, \q\subseteq \p\big\}$$ for every prime ideal $\p$ of $R$.  For an Artinian $R$-module $A$, the set of all attached primes $\Att_RA$  defined by I. G. Macdonald \cite{Mac}  makes  an important role similarly to the role of the set of associated primes $\Ass_RM$ of a finitely generated $R$-module $M$. It is well known that the local cohomology module $H^i_{\m}(M)$ is Artinian for all $i\geq 0$. Therefore, it is natural to ask whether the analogous relation 
$$ \ \ \ \ \ \ \ \ \ \ \ \ \ \ \ \ \ \ \ \ \ \ \ \ \ \ \ \  \Att_{R_{\p}}\big(H^{i-\dim (R/\p)}_{\p R_{\p}}(M_{\p})\big)=\big\{\q R_{\p}\mid \q\in\Att_R(H^i_{\m}(M)), \q\subseteq \p\big\}  \ \ \ \ \ \ \ \ \ \ \ \ \ \ \ \ \ \ \ \   (1)$$
between $\Att_{R_{\p}}\big(H^{i-\dim (R/\p)}_{\p R_{\p}}(M_{\p})\big)$ and $\Att_R(H^i_{\m}(M))$ holds true for every integer $i$ and every  $\p\in\Spec (R).$  If $R$ is a quotient of a Gorenstein local ring,  R. Y. Sharp \cite[Theorem 3.7]{Sh} proved that  (1) always holds true  (see also \cite[11.3.2]{BS}).    However, this relation does not hold true in general, cf. \cite[Example 11.3.14]{BS}.

Another question is about the relation between the attached primes of $H^i_{\m}(M)$ over $R$  and that of $H^i_{\m}(M)$ over the $\m$-adic completion $\R$ of $R$. Denote by $\widehat M$  the $\m$-adic completion of  $M$. Then we have following well known  relations between $\Ass_RM$ and $\Ass_{\R}\widehat M$
$$ \Ass_RM =\big\{\frak P\cap R\mid\frak P\in\Ass_{\R}\widehat M\big\} \ \text{and}\  \Ass_{\R}\widehat M=\displaystyle\bigcup_{\p\in\Ass M}\Ass_{\R}(\R/\p\R),$$
 cf. \cite[Theorem 23.2]{Mat}. For an Artinian $R$-module $A$, we note that $A$ has a natural structure as an Artinian  $\R$-module. Moreover, $\Att_RA=\{\frak P\cap R\mid \frak P\in\Att_{\R}A\}$ (see \cite[8.2.4, 8.2.5]{BS}), which is in some sense dual to  the  above first relation between $\Ass_RM$ and $\Ass_{\R}\widehat M.$ However,  the second analogous relation may not hold true even when $A=H^i_{\m}(M),$ i.e. the following relation
 $$\ \ \ \ \ \ \ \ \ \ \ \ \ \ \ \ \ \ \ \ \ \ \ \ \ \ \ \ \ \ \ \ \ \ \ \ \ \ \  \displaystyle \Att_{\R}(H^i_{\m}(M))=\bigcup_{\p\in\Att_R(H^i_{\m}(M))}\Ass_{\R}(\R/\p\R).\ \ \ \ \ \ \ \ \ \ \ \ \ \ \ \ \ \ \ \ \ \ \ \ \ \ \ \ \ \ \ \ \ \ \ \ \ (2)$$
is not true in general, cf. \cite[Example 2.3]{CN}.

 In this paper,  we study  attached primes of $H^i_{\m}(M)$ under  localization and $\m$-adic completion. We prove that (1) and (2) are both equivalent to  the condition that the base ring $R$ is universally catenary and all formal fibers of $R$ are Cohen-Macaulay.  The following theorem is the main result of this paper.

\begin{theorem} \label{T:1} The following statements are equivalent:
\begin{enumerate}[{(i)}]\rm
\item {\it $R$ is universally catenary and all its formal fibers are Cohen-Macaulay;}
\item {\it $\displaystyle  \Att_{R_{\p}}\big(H^{i-\dim (R/\p)}_{\p R_{\p}}(M_{\p})\big)=\big\{\q R_{\p}\mid \q\in\Att_R(H^i_{\m}(M)), \q\subseteq \p\big\}$ for every finitely generated $R$-module $M$, integer $i\geq 0$ and prime ideal $\p$ of $R$;}
\item {\it $\displaystyle\Att_{\R}(H^i_{\m}(M))=\bigcup_{\p\in\Att_R(H^i_{\m}(M))}\Ass_{\R}(\R/\p\R)$ for every finitely generated $R$-module $M$ and  integer $i\geq 0$.}
  \end{enumerate}
\end{theorem}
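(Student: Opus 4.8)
The plan is to transport everything to the $\m$-adic completion $\R$, where Matlis duality turns the Artinian modules $H^i_{\m}(M)$ into finitely generated ones, and then to combine this with Kawasaki's structure theorem characterizing homomorphic images of Cohen--Macaulay local rings.

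\emph{Step 1: reduction to associated primes over $\R$.} By Cohen's structure theorem write $\R=\widehat S/I\widehat S$ with $(\widehat S,\mathfrak n)$ a complete regular (hence Gorenstein) local ring of dimension $n$; by Kawasaki's theorem, (i) holds if and only if $R$, and then also $\R$, is a homomorphic image of a Cohen--Macaulay local ring, in which case one may moreover write $\R=\widehat T/J$ with $\widehat T$ complete Cohen--Macaulay carrying a canonical module $\omega_{\widehat T}$. I shall use freely that $H^i_{\m}(M)$ is Artinian with $H^i_{\m}(M)\cong H^i_{\mathfrak n}(\widehat M)$ over $\R$, that $\Att_R(H^i_{\m}(M))=\{\mathfrak P\cap R\mid\mathfrak P\in\Att_{\R}(H^i_{\m}(M))\}$, and that --- by Matlis duality over the complete ring $\R$ together with local duality over $\widehat S$ --- $\Att_{\R}(H^i_{\m}(M))=\Ass_{\R}(K^i)$, where $K^i:=\Ext^{n-i}_{\widehat S}(\widehat M,\widehat S)$ is a finitely generated $\R$-module. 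Combining this with the unconditional identities $\Ass_{R_{\p}}(N_{\p})=\{\q R_{\p}\mid\q\in\Ass_RN,\ \q\subseteq\p\}$ and $\Ass_{\R}(\widehat N)=\bigcup_{\q\in\Ass_RN}\Ass_{\R}(\R/\q\R)$ for finitely generated $N$, statement (iii) becomes
$$\Ass_{\R}(K^i)=\bigcup_{\mathfrak P\in\Ass_{\R}(K^i)}\Ass_{\R}(\R/(\mathfrak P\cap R)\R)$$
for all $M$ and $i$, while (ii) becomes the analogous assertion obtained by localizing at $\p$ and re-completing.

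\emph{Step 2: proof of (i)$\Rightarrow$(iii) and (i)$\Rightarrow$(ii).} Assume (i). For $\p\in\Spec R$ and $\mathfrak P\in\Spec\R$ with $\mathfrak P\cap R=\p$, the flat local homomorphism $R_{\p}\to\R_{\mathfrak P}$ has closed fiber $(\R/\p\R)_{\mathfrak P}$, a localization of the formal fiber $\R\otimes_R\kappa(\p)$ and hence Cohen--Macaulay; this condition on the closed fiber controls depths and local cohomology along $R_{\p}\to\R_{\mathfrak P}$ tightly enough to compare $\Ass$ and $\Att$ on the two sides. Two things follow. First, every $\mathfrak P\in\Ass_{\R}(K^i)$ is minimal over $(\mathfrak P\cap R)\R$ (otherwise the closed fiber above has positive dimension, and the tight control just mentioned forces $(K^i)_{\mathfrak P}$ to have positive depth, contradicting $\mathfrak P\in\Ass$); since the Cohen--Macaulay fiber is unmixed and $\R/\p\R$ is equidimensional (Ratliff), this says precisely $\mathfrak P\in\Ass_{\R}(\R/\p\R)$, giving the inclusion $\subseteq$. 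Second, if $\mathfrak P,\mathfrak P'$ both lie minimally over $\p\R$ then $\dim(\R/\mathfrak P)=\dim(\R/\mathfrak P')=\dim(R/\p)$, so the local-duality descriptions of $(K^i)_{\mathfrak P}$ and $(K^i)_{\mathfrak P'}$ involve the same cohomological degree $i-\dim(R/\p)$ of $H^{\bullet}_{\m}$ of $M$; the Cohen--Macaulay fiber then propagates the condition ``$\depth=0$'' from one localization to the other, so $\mathfrak P\in\Ass_{\R}(K^i)\Rightarrow\mathfrak P'\in\Ass_{\R}(K^i)$, which is the inclusion $\supseteq$. This proves (i)$\Rightarrow$(iii). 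Since $R_{\p}$ is again a homomorphic image of a Cohen--Macaulay local ring and $(K^i)_{\p}$ is finitely generated \emph{over} $R_{\p}$ (being killed by the defining ideal), the same argument carried out after localizing at $\p$, together with the unconditional localization formula for associated primes, yields (i)$\Rightarrow$(ii), recovering and sharpening Sharp's theorem.

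\emph{Step 3: the converses, and the main difficulty.} For (ii)$\Rightarrow$(i) and (iii)$\Rightarrow$(i) I argue by contraposition. If (i) fails, then by Kawasaki's theorem $R$ is not a homomorphic image of a Cohen--Macaulay local ring, so there is a prime $\p_0$ at which $\R/\p_0\R$ fails to be equidimensional or the formal fiber $\R\otimes_R\kappa(\p_0)$ fails to be Cohen--Macaulay. In the non-equidimensional case, $\R/\p_0\R$ has a minimal prime $\mathfrak P_0$ with $\dim(\R/\mathfrak P_0)<\dim(R/\p_0)=:d_0$; taking $M=R/\p_0$ and $i=d_0$ one has $\Att_R(H^{d_0}_{\m}(R/\p_0))=\{\p_0\}$, so the right-hand side of (iii) is $\Ass_{\R}(\R/\p_0\R)\ni\mathfrak P_0$ while the left-hand side is $\{\mathfrak P\in\Ass_{\R}(\R/\p_0\R)\mid\dim(\R/\mathfrak P)=d_0\}\not\ni\mathfrak P_0$, and (iii) fails; localizing $R/\p_0$ at a prime lying between $\p_0$ and $\m$ defeats (ii) in the same way. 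If instead $\R/\p_0\R$ is unmixed but a formal fiber is not Cohen--Macaulay, then there is a prime $\mathfrak P^{*}$ over $\p_0$ at which the fiber is not Cohen--Macaulay, and by reading off a degree $k$ in which a local cohomology module of that fiber acquires $\mathfrak P^{*}$ as attached prime one shows $\mathfrak P^{*}\in\Att_{\R}(H^{\,k+\dim(\R/\mathfrak P^{*})}_{\m}(R/\p_0))$ although $\mathfrak P^{*}\notin\Ass_{\R}(\R/\p_0\R)$, again violating (iii) (and, similarly, (ii)). The \emph{main obstacle} is exactly this second case, and correspondingly the reverse inclusion $\supseteq$ in Step 2: it is here that ``Cohen--Macaulay formal fibers'' is needed in full force rather than the superficially natural ``$\R/\p\R$ unmixed'', since an unmixed but non-Cohen--Macaulay formal fiber still spoils the formula in intermediate degrees --- this is precisely the phenomenon behind \cite[Example 2.3]{CN}, and controlling it is what converts Sharp's sufficient condition into a characterization and subsumes the main result of \cite{CN}.
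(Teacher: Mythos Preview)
Your overall strategy---pass to $\R$, turn $\Att$ into $\Ass$ of the deficiency modules $K^i=\Ext^{n-i}_{\widehat S}(\widehat M,\widehat S)$ via local duality, and then exploit Cohen--Macaulay formal fibres along $R_{\p}\to\R_{\mathfrak P}$---is a natural alternative to the paper's approach, and Step~3 (the contrapositive for (iii)$\Rightarrow$(i) and (ii)$\Rightarrow$(i), taking $M=R/\p_0$ and $\p=\p_0$) can indeed be made to work. But Step~2 contains a genuine gap, and it is exactly the place where the paper does something nontrivial.

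The problem is the inclusion $\subseteq$ in (i)$\Rightarrow$(iii). You assert that if $\mathfrak P\in\Ass_{\R}(K^i)$ is \emph{not} minimal over $\p\R$ (where $\p=\mathfrak P\cap R$), then the positive-dimensional Cohen--Macaulay closed fibre of $R_{\p}\to\R_{\mathfrak P}$ ``forces $(K^i)_{\mathfrak P}$ to have positive depth''. This would be immediate if $K^i$ were of the form $N\otimes_R\R$ for some finitely generated $R$-module $N$: then depth adds along a flat local map with CM fibre. But $K^i$ is only a finitely generated $\R$-module; it is \emph{not} obtained by base change from $R$, and there is no reason for a regular sequence in the fibre $\R_{\mathfrak P}/\p\R_{\mathfrak P}$ to be regular on $(K^i)_{\mathfrak P}$. (Relatedly, your parenthetical claim that $(K^i)_{\p}$ is finitely generated over $R_{\p}$ is false.) Unwinding local duality over $\widehat S_{\mathfrak P}$, the condition $\mathfrak P\in\Ass_{\R}(K^i)$ says that $\mathfrak P\R_{\mathfrak P}$ is attached to $H^{\,i-\dim(\R/\mathfrak P)}_{\mathfrak P\R_{\mathfrak P}}(\widehat M_{\mathfrak P})$, whereas flat base change along $R_{\p}\to\R_{\mathfrak P}$ only controls $H^{\bullet}_{\p\R_{\mathfrak P}}(\widehat M_{\mathfrak P})$; when the fibre has positive dimension these are linked only through a spectral sequence, and the CM hypothesis on the fibre does not by itself produce the conclusion you need.

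This is precisely the step the paper handles by an independent device: using the splitting theorem of Cuong--Quy for local cohomology, it produces a system of parameters $x_1,\ldots,x_d$ with $x_k\in\frak a(M/(x_1,\ldots,x_{k-1})M)^3$ and proves (Lemma~\ref{L:3}) that every $\mathfrak P\in\Att_{\R}(H^i_{\m}(M))$ with $i<d$ already lies in $\Ass_{\R}\big(\widehat M/(x_1,\ldots,x_j)\widehat M\big)$ for a suitable $j$. Since these \emph{are} modules base-changed from $R$, one then reads off $\mathfrak P\in\Ass_{\R}(\R/\p\R)$ from the standard formula $\Ass_{\R}(\widehat N)=\bigcup_{\q\in\Ass_R N}\Ass_{\R}(\R/\q\R)$. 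Your argument is missing an ingredient of this strength; without it, the ``tight control'' you invoke is only heuristic.
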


It should be mentioned that the condition (i) in Theorem \ref{T:1} is equivalent to the condition that $R$ is a quotient of a Cohen-Macaulay local ring, cf. \cite[Corollary 1.2]{Kaw}. Recently,  N. T. Cuong and D. T. Cuong \cite{CC} proved that the condition (i) in Theorem \ref{T:1} is  equivalent to the existence of a p-standard system of parameters of $R$.

In the next section, we give some preliminaries on attached primes of Artinian modules that will be used in the sequel. We prove the main result of this paper (Theorem \ref{T:1})  in the last section.

\section{Preliminaries}

I. G. Macdonald \cite{Mac} introduced the theory of secondary representation for Artinian modules, which is in some sense dual to the theory of primary decomposition for Noetherian modules. Let $A\neq 0$ be an Artinian $R$-module. We say that $A$ is {\it secondary} if the multiplication by $x$ on $A$ is surjective or nilpotent for every $x\in R.$ In this case, the set $\p :=\Rad (\Ann_RA)$ is a prime ideal of $R$ and we say that $A$ is {\it $\p$-secondary}. Note that every Artinian $R$-module $A$ has a minimal secondary representation $A=A_1+\ldots+A_n,$ where $A_i$ is $\p_i$-secondary, each $A_i$ is not redundant and $\p_i\neq \p_j$ for all $i\neq j.$ The set $\{\p_1,\ldots,\p_n\}$ is independent of the choice of the minimal secondary representation of $A$. This set is called the set of {\it attached primes} of $A$ and denoted by $\Att_RA$. 

For each ideal $I$ of $R$, we denote by $\Var(I)$  the set of all prime ideals of $R$ containing $I$. 

\begin{lemma}\label {L:1a}{\rm (\cite{Mac})}. The following statements are true.
\begin{enumerate}[{(i)}]\rm
\item {\it $A\neq 0$ if and only if $\Att_RA\neq \emptyset.$}
\item {\it $\min \Att_RA=\min \Var(\Ann_RA).$ In particular,  $$\dim (R/\Ann_RA)=\max\{\dim(R/\p)\mid\p\in\Att_RA\}.$$}
\item {\it If $0\rightarrow A'\rightarrow A\rightarrow A''\rightarrow 0$ is an exact sequence of Artinian $R$-modules then $$\Att_RA''\subseteq \Att_RA\subseteq \Att_RA'\cup\Att_RA''.$$}
 \end{enumerate}
\end{lemma}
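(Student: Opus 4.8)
\textbf{Proof proposal for Lemma \ref{L:1a}.}

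The plan is to prove the three statements directly from the definition of a minimal secondary representation. For (i), I would argue both directions. If $A \neq 0$, then Macdonald's theory guarantees that $A$ admits a minimal secondary representation $A = A_1 + \ldots + A_n$ with each $A_i$ a nonzero $\p_i$-secondary module, so $\Att_R A = \{\p_1, \ldots, \p_n\}$ is nonempty. Conversely, if $\Att_R A \neq \emptyset$ then $A$ has at least one nonzero secondary summand, so $A \neq 0$; equivalently, the empty sum represents only the zero module, so $\Att_R A = \emptyset$ forces $A = 0$.

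For (ii), let $A = A_1 + \ldots + A_n$ be a minimal secondary representation with $A_i$ being $\p_i$-secondary, so $\p_i = \Rad(\Ann_R A_i)$. Since $\Ann_R A = \bigcap_{i=1}^n \Ann_R A_i$, taking radicals gives $\Rad(\Ann_R A) = \bigcap_{i=1}^n \p_i$. Hence $\Var(\Ann_R A) = \Var(\bigcap_i \p_i) = \bigcup_i \Var(\p_i)$, and the minimal elements of this union are exactly the minimal elements among $\{\p_1, \ldots, \p_n\} = \Att_R A$; this proves $\min \Att_R A = \min \Var(\Ann_R A)$. The displayed dimension formula then follows because $\dim(R/\Ann_R A)$ is computed on the minimal primes of $\Var(\Ann_R A)$, which coincide with the minimal primes of $\Att_R A$, and every prime of $\Att_R A$ contains one of these, so the maximum of $\dim(R/\p)$ over $\p \in \Att_R A$ is attained at a minimal one.

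For (iii), suppose $0 \to A' \to A \to A'' \to 0$ is exact with all three modules Artinian. For the inclusion $\Att_R A'' \subseteq \Att_R A$: if $A'' = 0$ there is nothing to prove, and otherwise any secondary quotient of $A''$ is a secondary quotient of $A$, so each attached prime of $A''$ arises from $A$ as well; more carefully, taking a minimal secondary representation of $A$ and pushing it forward to $A''$ shows each $\p \in \Att_R A''$ lies in $\Att_R A$ after discarding redundant summands. For the inclusion $\Att_R A \subseteq \Att_R A' \cup \Att_R A''$: given a minimal secondary representation $A = A_1 + \ldots + A_n$, for each $i$ the image $\bar A_i$ of $A_i$ in $A''$ is $0$ or $\p_i$-secondary, and the intersection $A_i \cap A'$ contributes a $\p_i$-secondary piece when $\bar A_i = 0$; combining these gives secondary representations of $A'$ and $A''$ whose attached primes together cover $\{\p_1, \ldots, \p_n\}$. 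The main subtlety here is the bookkeeping in this last step — ensuring that each $\p_i$ genuinely appears in $\Att_R A'$ or $\Att_R A''$ rather than being lost to redundancy — but this is exactly the dual of the standard argument for associated primes of Noetherian modules, and I would carry it out by using that a nonzero homomorphic image of a $\p$-secondary module is $\p$-secondary together with the fact that $\Att_R(A' \oplus A'') = \Att_R A' \cup \Att_R A''$.
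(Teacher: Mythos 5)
The paper gives no proof of this lemma: it is cited directly to Macdonald \cite{Mac}, so your write-up is standing in for that reference rather than being compared to an in-text argument.

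Parts (i) and (ii) of your proposal are correct and are the standard arguments. The identity $\Ann_RA=\bigcap_i\Ann_RA_i$, the passage to radicals, and the observation that $\dim(R/\Ann_RA)$ is governed by the minimal primes of $\Var(\Ann_RA)$, which coincide with $\min\Att_RA$, are all fine.

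Part (iii) is where you have a real gap, and it is exactly the ``bookkeeping'' issue you flag but do not resolve. Your strategy---start from a minimal secondary representation $A=A_1+\cdots+A_n$, send $A_i$ to its image $\bar A_i$ in $A''$ when $\bar A_i\neq 0$, and keep $A_i\subseteq A'$ when $\bar A_i=0$---does not produce secondary representations of $A'$ and $A''$ whose attached primes cover $\{\p_1,\dots,\p_n\}$. Two things go wrong. First, the pieces you assign to $A'$ (namely the $A_i$ with $\bar A_i=0$) need not sum to $A'$, so you have no secondary representation of $A'$ at all. Second, even where the pieces do sum correctly (as for $A''$), a nonzero $\bar A_i$ may be redundant in every minimal secondary representation of $A''$, so $\bar A_i\neq 0$ does not give $\p_i\in\Att_RA''$, and at the same time $A_i\not\subseteq A'$ prevents you from placing $\p_i$ in $\Att_RA'$ by your rule. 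So the dichotomy ``$\bar A_i\neq 0\Rightarrow \p_i\in\Att_RA''$, else $\p_i\in\Att_RA'$'' simply is not justified.

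The clean fix is to abandon the attempt to decompose $A'$ and $A''$ and instead use the intrinsic characterization that for an Artinian module $A$, $\p\in\Att_RA$ if and only if $A$ admits a nonzero $\p$-secondary quotient. (This follows from your part (ii): any nonzero $\p$-secondary quotient $Q$ of $A$ inherits a secondary representation from $A$, whose radical-of-annihilator must be a finite intersection of the $\p_i$'s equal to the prime $\p$, forcing $\p=\p_i$ for some $i$; conversely $A/\sum_{j\neq i}A_j$ is a nonzero $\p_i$-secondary quotient by minimality.) Then for $\p\in\Att_RA$ take a nonzero $\p$-secondary quotient $Q=A/B$, let $Q'$ be the image of $A'$ in $Q$, and set $Q''=Q/Q'$. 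If $Q''\neq 0$ it is a nonzero $\p$-secondary quotient of $A''$, so $\p\in\Att_RA''$; if $Q''=0$ then $Q=Q'$ is a nonzero $\p$-secondary quotient of $A'$, so $\p\in\Att_RA'$. This is the standard Macdonald argument and closes the gap; your invocation of $\Att_R(A'\oplus A'')=\Att_RA'\cup\Att_RA''$ is not needed and is a red herring for a possibly non-split sequence.
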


Note that  $A$ has a natural structure as an $\R$-module and with this structure, each subset of $A$ is an $R$-submodule if and only if it is an $\R$-submodule. Therefore $A$ is an Artinian $\R$-module. So, the set of attached primes $\Att_{\R}A$ of $A$ over $\R$ is well defined.

\begin{lemma} \label {L:2a} {\rm(\cite[8.2.4, 8.2.5]{BS}).}  $\Att_RA=\big\{\mathfrak{P}\cap R \mid \mathfrak{P}\in\Att_{\R}A\big\}.$
\end{lemma}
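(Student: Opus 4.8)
The plan is to reduce the statement to an elementary description of attached primes that is valid over any Noetherian ring, and then to exploit the observation made just above the lemma that the $R$-submodules of $A$ are exactly its $\R$-submodules. I would first record two auxiliary facts, both immediate consequences of Macdonald's theory (Lemma~\ref{L:1a}). First: a \emph{nonzero} quotient $\bar S$ of a $\p$-secondary module $S$ is again $\p$-secondary; indeed $\bar S$ is plainly secondary and $\Rad(\Ann_R\bar S)\supseteq\Rad(\Ann_RS)=\p$, while conversely an element $x\notin\p$ acts surjectively on $S$, hence on $\bar S$, and a surjective self-map of a nonzero module is never nilpotent, so $x\notin\Rad(\Ann_R\bar S)$; thus $\Rad(\Ann_R\bar S)=\p$. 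Second: $\p\in\Att_RA$ if and only if $A$ has a nonzero $\p$-secondary quotient. The ``if'' direction follows from Lemma~\ref{L:1a}(iii), since a $\p$-secondary module has $\{\p\}$ as its set of attached primes; for ``only if'' one takes a minimal secondary representation $A=A_1+\cdots+A_n$ with $A_1$ being $\p$-secondary and passes to $A/(A_2+\cdots+A_n)$, which is nonzero by minimality, is a quotient of $A_1$, and hence is $\p$-secondary by the first fact. Finally I would note that if $A$ is $\mathfrak{P}$-secondary over $\R$ then $A$ is $(\mathfrak{P}\cap R)$-secondary over $R$: multiplication by an element of $R$ is the same operation over $R$ as over $\R$, so $A$ remains secondary over $R$, and $\Rad(\Ann_RA)=\Rad(\Ann_{\R}A)\cap R=\mathfrak{P}\cap R$, which is prime.

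With these in hand, the inclusion $\supseteq$ runs as follows. Given $\mathfrak{P}\in\Att_{\R}A$, the second fact applied over $\R$ produces a nonzero $\R$-quotient $Q$ of $A$ that is $\mathfrak{P}$-secondary over $\R$. Since the $\R$-submodules of $A$ coincide with its $R$-submodules, $Q$ is equally a quotient of $A$ over $R$, and by the last remark it is $(\mathfrak{P}\cap R)$-secondary over $R$; hence $\mathfrak{P}\cap R\in\Att_RA$ by the second fact over $R$.

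For the inclusion $\subseteq$, take $\p\in\Att_RA$ and, using the second fact over $R$, a nonzero $R$-quotient $Q=A/N$ of $A$ which is $\p$-secondary over $R$. As $N$ is also an $\R$-submodule, $Q$ is an Artinian $\R$-module, so $\Att_{\R}Q\neq\emptyset$ by Lemma~\ref{L:1a}(i); choose $\mathfrak{P}\in\Att_{\R}Q$. From the exact sequence $0\to N\to A\to Q\to 0$ of Artinian $\R$-modules together with Lemma~\ref{L:1a}(iii) we get $\mathfrak{P}\in\Att_{\R}A$, while the already-proved inclusion $\supseteq$, applied to $Q$ in place of $A$, gives $\mathfrak{P}\cap R\in\Att_RQ=\{\p\}$, so $\mathfrak{P}\cap R=\p$. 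Thus $\p$ lies in the right-hand side, which finishes the argument.

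The proof is essentially bookkeeping once the two auxiliary facts are established; the only point that requires a little care is the verification in the first fact that no element outside $\p$ can lie in the radical annihilator of a nonzero quotient of a $\p$-secondary module, which rests on a surjective self-map of a nonzero module never being nilpotent. I do not anticipate a genuine obstacle here. (Alternatively, one could simply quote \cite[8.2.4, 8.2.5]{BS}, but the self-contained route above is short.)
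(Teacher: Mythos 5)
Your proof is correct. Note that the paper itself does not actually prove Lemma~\ref{L:2a}; it simply cites \cite[8.2.4, 8.2.5]{BS}, so there is no in-paper argument to compare against. The self-contained route you take — characterizing $\p\in\Att_RA$ by the existence of a nonzero $\p$-secondary quotient of $A$, observing that the $R$-submodules and $\R$-submodules of $A$ coincide, and then transferring secondary quotients back and forth between $R$ and $\R$ — is the standard way this fact is established in the literature (it is essentially the argument underlying \cite[8.2.4, 8.2.5]{BS}), and every step you write is sound: the quotient of a $\p$-secondary module is $\p$-secondary or zero, the radical annihilator computation $\Rad(\Ann_RA)=\Rad(\Ann_{\R}A)\cap R$ is correct, and the reduction of the inclusion $\subseteq$ to the already-proved inclusion $\supseteq$ applied to the secondary quotient $Q$ is a nice piece of bookkeeping that avoids any need to chase a full secondary representation over $\R$. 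In short, the proposal is correct and, given that the paper merely quotes the result, supplies exactly the kind of self-contained justification one would expect.
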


\begin{lemma}   \label{L:3a} Let $A$ be an Artinian $R$-module. Let $(S,\frak n)$ be a Noetherian local ring and let  $\varphi: R\rightarrow S$ be a flat local homomorphism between local rings $(R,\m)$ and $(S,\frak n).$ Suppose that  $\dim (S/\m S)=0.$ Then $A\otimes_R S$ is an Artinian $S$-module and
$$\Att_RA=\{\varphi^{-1}(\frak P)\mid \frak P\in\Att_S(A\otimes S)\}.$$
\end{lemma}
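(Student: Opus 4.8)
The plan is to proceed in three steps: show that $A\otimes_R S$ is Artinian over $S$; settle the case where $A$ is secondary; and deduce the general case from a minimal secondary representation of $A$. For the first step, observe that since $A$ is Artinian it is $\m$-torsion, so $A\otimes_R S$ is $\m S$-torsion, and since $\dim(S/\m S)=0$ the ring $S/\m S$ is Artinian local, so $\frak n^t\subseteq\m S$ for some $t\geq 1$ and $A\otimes_R S$ is $\frak n$-torsion. To bound its socle I would use that $R/\m$ is finitely presented over $R$ and $\varphi$ is flat, so that $\Hom_R(R/\m,-)$ commutes with $-\otimes_R S$, yielding $(0:_{A\otimes_R S}\m S)\cong(0:_A\m)\otimes_R S$; as $A$ is Artinian, $(0:_A\m)$ is a finite-dimensional $R/\m$-vector space, so $(0:_A\m)\otimes_R S$ is a finite direct sum of copies of the Artinian ring $S/\m S$, hence of finite length over $S$. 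Since $\m S\subseteq\frak n$, the socle $(0:_{A\otimes_R S}\frak n)$ lies inside $(0:_{A\otimes_R S}\m S)$ and is therefore of finite length over $S$; because an $\frak n$-torsion $S$-module is an essential extension of its socle, $A\otimes_R S$ embeds into a finite direct sum of copies of $E_S(S/\frak n)$ and is thus Artinian. In particular $\Att_S(A\otimes_R S)$ makes sense, and since a flat local homomorphism of local rings is faithfully flat, $A\neq 0$ forces $A\otimes_R S\neq 0$.

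For the second step, suppose $A$ is $\p$-secondary; then by the first step $A\otimes_R S$ is a nonzero Artinian $S$-module, so $\Att_S(A\otimes_R S)\neq\emptyset$ by Lemma~\ref{L:1a}(i). Fix $\frak P\in\Att_S(A\otimes_R S)$ and $x\in R$. If $x\in\p$, then multiplication by $x$ on $A$ is nilpotent, hence so is multiplication by $\varphi(x)$ on $A\otimes_R S$, so a power of $\varphi(x)$ annihilates $A\otimes_R S$ and therefore lies in $\frak P$ (every attached prime contains the annihilator, cf. Lemma~\ref{L:1a}(ii)); thus $x\in\varphi^{-1}(\frak P)$. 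If $x\notin\p$, then multiplication by $x$ on $A$ is surjective, hence, by right-exactness of $-\otimes_R S$, multiplication by $\varphi(x)$ on $A\otimes_R S$ is surjective, and so it stays surjective on any $\frak P$-secondary quotient of $A\otimes_R S$; since a surjective endomorphism of a nonzero secondary module cannot be nilpotent, this forces $\varphi(x)\notin\frak P$, i.e. $x\notin\varphi^{-1}(\frak P)$. Hence $\varphi^{-1}(\frak P)=\p$ for every $\frak P\in\Att_S(A\otimes_R S)$, and as this set is nonempty, $\{\varphi^{-1}(\frak P)\mid\frak P\in\Att_S(A\otimes S)\}=\{\p\}=\Att_RA$.

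For the third step, take a minimal secondary representation $A=A_1+\cdots+A_n$ with $A_i$ being $\p_i$-secondary, so $\Att_RA=\{\p_1,\dots,\p_n\}$. Each $A_i$ is a submodule of $A$, hence Artinian, so each $A_i\otimes_R S$ is Artinian by the first step, and by flatness $A_i\otimes_R S$ is a submodule of $A\otimes_R S$ with $A\otimes_R S=\sum_i(A_i\otimes_R S)$. Concatenating secondary representations of the modules $A_i\otimes_R S$ produces one for $A\otimes_R S$, so $\Att_S(A\otimes_R S)\subseteq\bigcup_i\Att_S(A_i\otimes_R S)$, and combined with the second step this shows $\varphi^{-1}(\frak P)\in\{\p_1,\dots,\p_n\}$ for each $\frak P\in\Att_S(A\otimes_R S)$. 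For the reverse inclusion, fix $i$: the natural surjection of $A$ onto the nonzero $\p_i$-secondary module $A/\sum_{j\neq i}A_j$ stays surjective after applying $-\otimes_R S$, so by Lemma~\ref{L:1a}(iii) together with the second step there is $\frak P\in\Att_S(A\otimes_R S)$ with $\varphi^{-1}(\frak P)=\p_i$. This yields the claimed equality. I expect the main obstacle to be the first step — checking that Artinianness is inherited by $A\otimes_R S$, which comes down to controlling its socle through flat base change — after which the description of the attached primes follows formally from the surjective/nilpotent dichotomy for secondary modules and the faithful flatness of $\varphi$.
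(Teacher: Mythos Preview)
Your proof is correct and follows essentially the same route as the paper: Artinianness via the isomorphism $(0:_{A\otimes_R S}\m S)\cong (0:_A\m)\otimes_R S$ (the paper packages this as Melkersson's criterion \cite{Mel}), and the attached-prime statement via the surjective/nilpotent dichotomy on secondary pieces together with faithful flatness of $\varphi$. The only differences are organizational---you isolate the secondary case first and use the quotient $A/\sum_{j\neq i}A_j$ for the reverse inclusion, whereas the paper argues directly that no $A_i\otimes_R S$ becomes entirely redundant in the concatenated secondary representation of $A\otimes_R S$.
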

\begin{proof} Firstly we use Melkersson's criterion \cite[Theorem 1.3]{Mel} to prove $A\otimes_RS$ is an Artinian $S$-module.  Since $S$ is flat over $R$ and $R/\m$ is of finite representation, we get by \cite[Theorem 7.11]{Mat} that
$$\Hom_S(S/\m S;A\otimes_RS)\cong \Hom_S(R/\m\otimes_RS ; A\otimes_RS)\cong \Hom_R(R/\m; A)\otimes_RS.$$
Because $A$ is an Artinian $R$-module, $\Hom_R(R/\m;A)$ is an $R$-module of finite length. Hence $\Hom_R(R/\m;A)$ is a finitely generated $R$-module. Therefore $ \Hom_R(R/\m; A)\otimes_RS$ is a finitely generated $S$-module  which is annihilated by $\m S.$   Because  $\dim (S/\m S)=0,$ it follows that $\Hom_R(R/\m; A)\otimes_RS$ is an $S$-module of finite length.
Since $A$ is $\m$-torsion, it is obvious to see that $A\otimes_R S$ is $\m S$-torsion. Therefore $A\otimes_R S$ is an Artinian $S$-module.

Let $A=A_1+\ldots +A_n$ be a minimal secondary representation of $A$, where $A_i$ is  $\p_i$-secondary for $i=1,\ldots , n.$  Then $\Att_RA=\{\p_1,\ldots , \p_n\}.$ As $S$ is a faithfully flat $R$-algebra, $R$ can be considered as a subring of $S$ and  $A_i\otimes_R S$ can be considered as a submodule of $A\otimes_RS$ for all $i=1,\ldots ,n.$  Then we have $A\otimes_R S=(A_1\otimes_RS)+\ldots +(A_n\otimes S).$ For each $i=1,\ldots ,n,$ choose a minimal secondary representation $A_i\otimes_RS=B_{i1}+\ldots +B_{ik_i}$  of $S$-module $A_i\otimes_RS,$  where $B_{ij}$ is $\frak P_{ij}$-secondary.  Then  $\displaystyle A\otimes_RS=\sum_{i=1}^n(B_{i1}+\ldots +B_{ik_i})$ is a secondary representation of $A\otimes_RS$. By removing all redundant components and then remumbering the components, we can assume that there exists an integer $t_i\leqslant k_i$ for $i=1,\ldots ,n$ such that  $\displaystyle A\otimes_RS=\sum_{i=1}^n(B_{i1}+\ldots +B_{it_i})$ is a secondary representation of $\displaystyle A\otimes_RS$ without any redundant component.   Since $A_i$ is not redundant in the secondary representation  $A=A_1+\ldots +A_t$ and $S$ is faithfully flat over $R$, we have $t_i\geq 1$ for all $i=1,\ldots ,n.$  Now let  $i\in\{1,\ldots ,n\}$ and let $x\in\p_i$. Then $x^mA_i=0$ for some $m\in \Bbb N$. Hence $x^m(A_i\otimes_RS)=0$ and hence $x^mB_{ij}=0$ for all $j=1,\ldots ,t_i.$ Therefore $x\in\frak P_{ij}\cap R$ for all $j=1,\ldots ,t_i.$ Let $x\in R\setminus \p_i.$ Then $x^mA_i=A_i$ and hence $x^m(A_i\otimes_RS)= A_i\otimes_RS$ for all $m\in\Bbb N.$ If $x\in\frak P_{ij}$ for some $j\in\{1,\ldots ,t_i\}$ then  $x^{m_0}B_{ij}=0$ for some $m_0\in\Bbb N$ and hence $x^{m_0}(A_i\otimes_RS)\neq A_i\otimes_RS$, this is a contradiction. Therefore $x\notin \frak P_{ij}$ for all $j=1,\ldots ,t_i.$ It follows that $\p_i=\frak P_{ij}\cap R$ for all $j=1,\ldots ,t_i.$ Hence $\frak P_{ij}$'s are pairwise different and hence   $\displaystyle A\otimes_RS=\sum_{i=1}^n(B_{i1}+\ldots +B_{it_i})$ is a minimal secondary representation of $\displaystyle A\otimes_RS$. Therefore $\Att_S(A\otimes_RS)=\{\frak P_{ij}\mid i=1,\ldots ,n, j=1,\ldots ,t_i\}.$ Thus $$\Att_RA=\{\frak P\cap R\mid \frak P\in\Att_S(A\otimes_R S)\}.$$
\end{proof}

\section{Main results}

 An important step to prove the main result of this paper is to find for each integer $i<d$ and each attached prime $\p\in \Att_{R}(H^i_{\m}(M))$ a suitable finitely generated $R$-module $N$ such that $\p\in\Ass_{R}N$ (see Lemma \ref{L:3}).  This step can be done by using a splitting property for local cohomology modules proved by N. T. Cuong and P. H. Quy \cite[Corollary 3.5]{CQ1} (see Lemma \ref{L:1}).  It should be mentioned that this  splitting property is an extension of the original splitting result  \cite[Theorem 1.1]{CQ}.

From now on, for a subset $T$ of $\Spec (R)$ and an integer $i\geq 0$, we  set 
$$(T)_i=\{\p\in T\mid \dim (R/\p)=i\}.$$   For a finitely generated $R$-module $N$ of dimension $t>0$,  we set $\frak a_i(N)=\Ann_R(H^i_{\m}(N))$ for $i=0,\ldots ,t$ and   $\frak a(N)=\frak a_0(N)\ldots \frak a_{t-1}(N).$  Note that 
 $$\displaystyle \frak a(N)\subseteq \underset{\underline x}{\bigcap}\ \bigcap_{i=1}^t\Ann_R(0:_{N/(x_1,\ldots ,x_{i-1})N}x_i),$$ where $\underline x=(x_1,\ldots ,x_t)$ runs over the set of all systems of parameters of $N$, cf. \cite[Satz 2.4.5]{Sch}. Therefore,  by \cite[Corollary 3.5]{CQ1} we have the following splitting result.

\begin{lemma}  \label{L:1} Set $\overline M=M/U_M(0)$, where $U_M(0)$  is the largest submodule of $M$ of dimension less than $d$. Suppose that  $x\in\frak a(M)^3$  is a parameter element of $M$. Then for all $i<d-1$  we have 
$$H^i_{\m}(M/xM)\cong H^i_{\m}(M)\oplus H^{i+1}_{\m}(\overline{M}).$$
\end{lemma}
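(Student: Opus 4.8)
The plan is to derive Lemma~\ref{L:1} from the splitting result \cite[Corollary 3.5]{CQ1} of Cuong and Quy. The key point is that the hypothesis of that corollary requires $x$ to be a parameter element lying in a suitable ideal whose power kills enough of the ``non-Cohen-Macaulay'' part of $M$ in low degrees; here the choice $x\in\frak a(M)^3$ is precisely tailored so that $x$ is a ``filter-regular-type'' element that behaves like a regular element on the relevant subquotients. First I would record that since $x\in\frak a(M)^3\subseteq \bigcap_{\underline x}\bigcap_{i=1}^t\Ann_R(0:_{M/(x_1,\dots,x_{i-1})M}x_i)$ by \cite[Satz 2.4.5]{Sch}, the element $x$ annihilates each of these colon modules; cubing the ideal gives the slack needed to invoke the splitting in the form where the connecting maps in the long exact sequence vanish. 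Then \cite[Corollary 3.5]{CQ1} applies directly and yields the claimed isomorphism $H^i_{\m}(M/xM)\cong H^i_{\m}(M)\oplus H^{i+1}_{\m}(\overline M)$ for all $i<d-1$.

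More concretely, I would proceed as follows. Set $d=\dim M$ and $\overline M=M/U_M(0)$, so that $\dim\overline M=d$ and $\overline M$ has no nonzero submodule of dimension $<d$ (equivalently, $\overline M$ is unmixed in the sense that $U_{\overline M}(0)=0$); note $H^i_{\m}(M)\cong H^i_{\m}(\overline M)$ for $i<d$ and $H^i_{\m}(M)=H^i_{\m}(U_M(0))$ for $i\le d$ split off appropriately, so one may essentially reduce to $\overline M$. Since $\frak a(M)$ and $\frak a(\overline M)$ differ only by factors $\frak a_i$ with $i<d$ that are unaffected by passing to $\overline M$, the element $x$ remains in an appropriate power of $\frak a(\overline M)$ and remains a parameter element of $\overline M$. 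Now apply \cite[Corollary 3.5]{CQ1} to $\overline M$ with the parameter element $x$: that corollary states exactly that for such $x$ one has $H^i_{\m}(\overline M/x\overline M)\cong H^i_{\m}(\overline M)\oplus H^{i+1}_{\m}(\overline M)$ for $i<\dim\overline M-1=d-1$. Finally I would transfer this back to $M$: because $x$ is a parameter element of $M$ and kills $U_M(0)$ in the relevant range (as $\dim U_M(0)<d$ forces $H^i_{\m}(U_M(0))=0$ for $i\ge\dim U_M(0)$ and the low-degree contributions are absorbed), the short exact sequences $0\to U_M(0)\to M\to\overline M\to 0$ and the multiplication-by-$x$ sequences give $H^i_{\m}(M/xM)\cong H^i_{\m}(\overline M/x\overline M)$ for $i<d-1$, completing the proof.

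The main obstacle I anticipate is the careful bookkeeping at the boundary degrees and the precise verification that the hypotheses of \cite[Corollary 3.5]{CQ1} are met by the specific cube $\frak a(M)^3$ rather than some other power: one must check that $x$ being in the third power (not merely the first) is what guarantees that the Mayer--Vietoris / long exact sequence connecting maps split, so that a genuine direct sum and not just a filtration appears. A secondary technical point is ensuring the passage between $M$ and $\overline M$ does not shift the range of valid $i$; since $U_M(0)$ has dimension strictly less than $d$, its local cohomology vanishes in degrees $\ge \dim U_M(0)$, but for small $i$ one needs that $H^i_{\m}(M)\to H^i_{\m}(\overline M)$ and the comparison of $M/xM$ with $\overline M/x\overline M$ behave well — this is where having $x\in\frak a(M)^3\subseteq\Ann_R U_M(0)$-type control (via the $\frak a_i$ with $i<d$, which include the annihilators relevant to $U_M(0)$) does the work. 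Beyond that the argument is a direct citation of the quoted splitting theorem together with standard long exact sequence manipulations, so I would keep those routine and focus the exposition on the two points above.
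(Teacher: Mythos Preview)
Your first paragraph is exactly what the paper does: Lemma~\ref{L:1} is not proved in the paper at all but is stated as an immediate consequence of \cite[Corollary~3.5]{CQ1}, together with the observation (via \cite[Satz~2.4.5]{Sch}) that $\frak a(M)$ is contained in the intersection of the relevant colon annihilators. So if you had stopped there, your proposal would match the paper's approach precisely.

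The second paragraph, however, introduces genuine errors. You assert that $H^i_{\m}(M)\cong H^i_{\m}(\overline M)$ for all $i<d$ and that $H^i_{\m}(M/xM)\cong H^i_{\m}(\overline M/x\overline M)$ for all $i<d-1$; neither of these holds in general. From the exact sequence $0\to U_M(0)\to M\to \overline M\to 0$ one only knows that $H^i_{\m}(U_M(0))=0$ for $i>\dim U_M(0)$, and since $\dim U_M(0)$ may be as large as $d-1$, the modules $H^i_{\m}(U_M(0))$ can be nonzero for every $i\le d-1$. Thus the map $H^i_{\m}(M)\to H^i_{\m}(\overline M)$ need be neither injective nor surjective in the range you need. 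The same obstruction applies to the comparison of $M/xM$ with $\overline M/x\overline M$. The point is that \cite[Corollary~3.5]{CQ1} already delivers the \emph{mixed} statement $H^i_{\m}(M/xM)\cong H^i_{\m}(M)\oplus H^{i+1}_{\m}(\overline M)$ directly; there is no reduction to $\overline M$ to be carried out, and attempting one only creates false intermediate claims. Drop the second paragraph entirely.
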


By using Lemma \ref{L:1}, we have the following property, which is needed in the induction step of the proof of Lemma \ref{L:3}.

\begin{lemma} \label{L:2} Suppose that $x\in\frak a(M)^3$ is a parameter element of $M$.  Then we have
$$\bigcup_{i=0}^{d-1}\Att_R(H^i_{\m}(M))\subseteq \bigcup_{i=0}^{d-2}\Att_R(H^i_{\m}(M/xM))\cup (\Ass_RM)_{d-1}.$$\end{lemma}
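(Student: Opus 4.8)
The plan is to combine the splitting isomorphism of Lemma \ref{L:1} with the standard behaviour of local cohomology and attached primes under the short exact sequence $0\to M/U_M(0)\to \dots$, but the cleanest route is to work directly with $\overline M=M/U_M(0)$ and $M$ separately. First I would dispose of the top-degree pieces that the splitting does not see: for $i=d-1$, note $H^{d-1}_{\m}(M)$ need not split off, so I must bound $\Att_R(H^{d-1}_{\m}(M))$ by hand. Here I would use the long exact sequence coming from $0\to U_M(0)\to M\to \overline M\to 0$. Since $\dim U_M(0)<d$, we have $H^{d-1}_{\m}(M)$ sitting in an exact sequence with $H^{d-1}_{\m}(U_M(0))$ and $H^{d-1}_{\m}(\overline M)$, and by Lemma \ref{L:1a}(iii), $\Att_R(H^{d-1}_{\m}(M))\subseteq \Att_R(H^{d-1}_{\m}(U_M(0)))\cup\Att_R(H^{d-1}_{\m}(\overline M))$. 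The first set consists of primes $\p$ with $\dim(R/\p)\le d-1$ and $\p\supseteq\Ann_R U_M(0)$; since $\dim U_M(0)\le d-1$, every such $\p$ of dimension exactly $d-1$ is a minimal prime of $U_M(0)$, hence lies in $(\Ass_R M)_{d-1}$ (minimal primes of $U_M(0)$ of dimension $d-1$ are associated primes of $M$), and any $\p$ of smaller dimension will be absorbed below.

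Next, for the remaining degrees $i\le d-2$ together with $H^{d-1}_{\m}(\overline M)$, I would apply Lemma \ref{L:1} to $M$: for every $i<d-1$,
$$H^i_{\m}(M/xM)\cong H^i_{\m}(M)\oplus H^{i+1}_{\m}(\overline M),$$
which gives $\Att_R(H^i_{\m}(M))\subseteq\Att_R(H^i_{\m}(M/xM))$ and $\Att_R(H^{i+1}_{\m}(\overline M))\subseteq\Att_R(H^i_{\m}(M/xM))$ for all $0\le i\le d-2$. The first of these, ranging over $0\le i\le d-2$, already yields $\bigcup_{i=0}^{d-2}\Att_R(H^i_{\m}(M))\subseteq\bigcup_{i=0}^{d-2}\Att_R(H^i_{\m}(M/xM))$. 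The second, taken at $i=d-2$, gives $\Att_R(H^{d-1}_{\m}(\overline M))\subseteq\Att_R(H^{d-2}_{\m}(M/xM))$, which takes care of the $H^{d-1}_{\m}(\overline M)$ contribution identified in the previous paragraph. Note $\dim(M/xM)=d-1$ since $x$ is a parameter element of $M$, so $H^i_{\m}(M/xM)=0$ for $i\ge d$ and the indices $i\le d-2$ are exactly the subtop range for $M/xM$, matching the statement.

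Finally I would assemble: $\bigcup_{i=0}^{d-1}\Att_R(H^i_{\m}(M))$ equals $\bigcup_{i=0}^{d-2}\Att_R(H^i_{\m}(M))$ union $\Att_R(H^{d-1}_{\m}(M))$; the first term is contained in $\bigcup_{i=0}^{d-2}\Att_R(H^i_{\m}(M/xM))$ by the splitting, and the second is contained in $\Att_R(H^{d-1}_{\m}(U_M(0)))\cup\Att_R(H^{d-2}_{\m}(M/xM))$, where the $U_M(0)$ term contributes only primes of dimension $\le d-1$ lying over $\Ann_R U_M(0)$. The one point needing care — and the main obstacle — is verifying that these "low-dimensional" primes coming from $U_M(0)$ actually land in $(\Ass_R M)_{d-1}$ rather than being genuinely extra: a prime $\p\in\Att_R(H^{d-1}_{\m}(U_M(0)))$ satisfies $\dim(R/\p)\le d-1$; if $\dim(R/\p)=d-1$ then by Lemma \ref{L:1a}(ii) applied to $U_M(0)$ it is a minimal prime of $\Supp U_M(0)$ with $\dim(R/\p)=\dim U_M(0)=d-1$, hence $\p\in\Ass_R U_M(0)\subseteq\Ass_R M$, giving $\p\in(\Ass_R M)_{d-1}$; if $\dim(R/\p)<d-1$, then $\p\in\Var(\Ann_R U_M(0))$, and one checks such $\p$ is already in $\Att_R(H^j_{\m}(M))$ for some $j\le d-2$ (via the same long exact sequence in lower degree) or can be absorbed using that $H^{d-1}_{\m}(U_M(0))$ is a quotient of a module supported in dimension $<d-1$ in those degrees — in any case it is swallowed by $\bigcup_{i=0}^{d-2}\Att_R(H^i_{\m}(M))\subseteq\bigcup_{i=0}^{d-2}\Att_R(H^i_{\m}(M/xM))$. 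This bookkeeping with $U_M(0)$ is the only delicate part; everything else is a formal consequence of Lemmas \ref{L:1a} and \ref{L:1}.
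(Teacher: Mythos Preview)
Your overall architecture matches the paper's proof exactly: use the long exact sequence of $0\to U_M(0)\to M\to\overline M\to 0$ in degree $d-1$, then invoke the splitting of Lemma~\ref{L:1} for $i\le d-2$. The only issue is the step you yourself flag as ``the main obstacle'': controlling $\Att_R\big(H^{d-1}_{\m}(U_M(0))\big)$.

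Your handling of this set is incomplete. For primes of dimension $d-1$ your argument is essentially right (such a prime must be minimal over $\Ann_R U_M(0)$, hence in $(\Ass_R M)_{d-1}$). But for a hypothetical $\p\in\Att_R\big(H^{d-1}_{\m}(U_M(0))\big)$ with $\dim(R/\p)<d-1$ you offer only a sketch (``one checks such $\p$ is already in $\Att_R(H^j_{\m}(M))$ for some $j\le d-2$\dots''), and this sketch does not work: there is no mechanism forcing an embedded attached prime of $H^{d-1}_{\m}(U_M(0))$ to appear as an attached prime of some lower $H^j_{\m}(M)$.

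The fix is that no such low-dimensional primes exist. If $\dim U_M(0)<d-1$ then $H^{d-1}_{\m}(U_M(0))=0$ by Grothendieck vanishing. If $\dim U_M(0)=d-1$ then $H^{d-1}_{\m}(U_M(0))$ is the \emph{top} local cohomology of $U_M(0)$, and the standard formula \cite[7.3.2]{BS} gives
\[
\Att_R\big(H^{d-1}_{\m}(U_M(0))\big)=\big(\Ass_R U_M(0)\big)_{d-1}=(\Ass_R M)_{d-1}
\]
exactly---every attached prime has dimension precisely $d-1$. This is what the paper invokes, and it removes the bookkeeping you were worried about entirely. Once you use this, your argument and the paper's are the same.
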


\begin{proof}  Denote by $U_M(0)$  the largest submodule of $M$ of dimension less than $d$. Set $\overline M=M/U_M(0)$.  Firstly we claim that $$\Att_R(H^{d-1}_{\m}(M))=(\Ass_RM)_{d-1}\cup \Att_R(H^{d-1}_{\m}(\overline M)).$$ In fact, from the exact sequence $0\rightarrow U_M(0)\rightarrow M\rightarrow \overline M\rightarrow 0$ we have the exact sequence $$H^{d-1}_{\m}(U_M(0))\overset f{\rightarrow} H^{d-1}_{\m}(M)\rightarrow H^{d-1}_{\m}(\overline M)\rightarrow 0.$$ If $\dim (U_M(0))<d-1$ then $\Att_R(H^{d-1}_{\m}(U_M(0)))=\emptyset=(\Ass_RM)_{d-1}$ by Lemma \ref{L:1a}(i). Otherwise, we have $\dim (U_M(0))=d-1$, and hence $$\Att_R(H^{d-1}_{\m}(U_M(0)))=(\Ass_RU_M(0))_{d-1}=(\Ass_RM)_{d-1}$$ by \cite[7.3.2]{BS}. Therefore, it follows by Lemma \ref{L:1a}(iii) that
\begin{align}\Att_R(H^{d-1}_{\m}(M))&\subseteq \Att_R\big(H^{d-1}_{\m}(U_M(0))/\Ker f\big)\cup \Att_R(H^{d-1}_{\m}(\overline M))\notag\\
&\subseteq \Att_R(H^{d-1}_{\m}(U_M(0)))\cup \Att_R(H^{d-1}_{\m}(\overline M))\notag\\
&=(\Ass_RM)_{d-1}\cup \Att_R(H^{d-1}_{\m}(\overline M)).\notag\end{align}
Since  $(\Ass_RM)_{d-1}\subseteq \Att_R(H^{d-1}_{\m}(M))$ by \cite[11.3.9]{BS} and $\Att_R(H^{d-1}_{\m}(\overline M))\subseteq \Att_R(H^{d-1}_{\m}(M))$ by the above exact sequence, it follows that 
$$\Att_R(H^{d-1}_{\m}(M))=(\Ass_RM)_{d-1}\cup \Att_R(H^{d-1}_{\m}(\overline M)).$$ So, the claim is proved.

Now, it follows by Lemma \ref{L:1} that 
$$\bigcup_{i=0}^{d-2}\Att_R(H^i_{\m}(M/xM))=\bigcup_{i=0}^{d-2}\Big(\Att_R(H^i_{\m}(M))\cup \Att_R(H^{i+1}_{\m}(\overline M))\Big).$$  Note that $H^0_{\m}(\overline M)=0$. Therefore we get by the above claim that 
\begin{align}\bigcup_{i=0}^{d-2}&\Att_R(H^i_{\m}(M/xM))\cup (\Ass_RM)_{d-1}=\notag\\
&=\bigcup_{i=0}^{d-2}\Big(\Att_R(H^i_{\m}(M))\cup \Att_R(H^i_{\m}(\overline M))\Big)\cup \Big((\Ass_RM)_{d-1}\cup \Att_R(H^{d-1}_{\m}(\overline M))\Big)\notag\\
&=\bigcup_{i=0}^{d-1}\Big(\Att_R(H^i_{\m}(M))\cup \Att_R(H^i_{\m}(\overline M))\Big).\notag\end{align}
From this, it is obvious to see that
$$\bigcup_{i=0}^{d-1}\Att_R(H^i_{\m}(M))\subseteq \bigcup_{i=0}^{d-2}\Att_R(H^i_{\m}(M/xM))\cup (\Ass_RM)_{d-1}.$$
\end{proof}

The following lemma can be considered as the key lemma for the proof of the main result of this paper.

\begin{lemma} \label {L:3} Let $(x_1,\ldots ,x_d)$ be a system of parameters of $M$ such that for all $i=1,\ldots ,d$ we have  $x_i\in\frak a(M/(x_1,\ldots ,x_{i-1})M)^3.$  Then we have
$$\bigcup_{i=0}^{d-1}\Att_R(H^i_{\m}(M))\subseteq \bigcup_{i=0}^{d-1}\Big(\Ass_R(M/(x_1,\ldots ,x_i)M)\Big)_{d-i-1}.$$
\end{lemma}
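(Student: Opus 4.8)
The plan is to prove this by induction on $d = \dim M$, using Lemma~\ref{L:2} as the inductive step. When $d = 0$ there is nothing to prove (the union on the left is empty, as is the one on the right). When $d \geq 1$, the hypothesis guarantees $x_1 \in \frak a(M)^3$ is a parameter element of $M$, so Lemma~\ref{L:2} applies and gives
$$\bigcup_{i=0}^{d-1}\Att_R(H^i_{\m}(M))\subseteq \bigcup_{i=0}^{d-2}\Att_R(H^i_{\m}(M/x_1M))\cup (\Ass_RM)_{d-1}.$$
The second term $(\Ass_R M)_{d-1}$ is exactly the $i=0$ summand $\big(\Ass_R(M/(x_1,\ldots,x_0)M)\big)_{d-1}$ (interpreting the empty product of parameters as giving $M$ itself) on the right-hand side of the conclusion.

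Next I would apply the induction hypothesis to the module $M' = M/x_1M$, which has dimension $d-1$. One must first check that the truncated system $(x_2,\ldots,x_d)$ is a system of parameters of $M'$ satisfying the analogous chain of conditions: for each $j = 2,\ldots,d$ one needs $x_j \in \frak a\big(M'/(x_2,\ldots,x_{j-1})M'\big)^3$. But $M'/(x_2,\ldots,x_{j-1})M' = M/(x_1,x_2,\ldots,x_{j-1})M$, so this is precisely the hypothesis $x_j \in \frak a(M/(x_1,\ldots,x_{j-1})M)^3$ that we are given. Hence induction yields
$$\bigcup_{i=0}^{d-2}\Att_R(H^i_{\m}(M'))\subseteq \bigcup_{i=0}^{d-2}\Big(\Ass_R\big(M'/(x_2,\ldots,x_{i+1})M'\big)\Big)_{(d-1)-i-1},$$
and after reindexing $j = i+1$ and rewriting $M'/(x_2,\ldots,x_{j})M' = M/(x_1,\ldots,x_{j})M$, the right-hand side becomes $\bigcup_{j=1}^{d-1}\big(\Ass_R(M/(x_1,\ldots,x_j)M)\big)_{d-j-1}$. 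Combining this with the $i=0$ term noted above gives exactly the claimed inclusion.

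The main subtlety—really the only thing that needs care—is the bookkeeping of indices and the convention for the empty product of parameters (so that the $i=0$ term on the right is $M$ itself and matches $(\Ass_R M)_{d-1}$), together with the verification that the chain condition on the parameters is inherited by the quotient $M/x_1M$; as observed, the latter is immediate because $(M/x_1M)/(x_2,\ldots,x_{j-1})(M/x_1M)$ is literally $M/(x_1,\ldots,x_{j-1})M$. No genuine obstacle arises beyond this; the real content has already been packaged into Lemmas~\ref{L:1} and~\ref{L:2}.
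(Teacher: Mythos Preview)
Your proposal is correct and follows essentially the same approach as the paper's proof: induction on $d$, with Lemma~\ref{L:2} supplying the inductive step and the reindexing $M'/(x_2,\ldots,x_{i+1})M' = M/(x_1,\ldots,x_{i+1})M$ giving the conclusion. The only cosmetic difference is that the paper takes $d=1$ as the base case (checking directly that $\Att_R(H^0_{\m}(M))\subseteq(\Ass_R M)_0$), whereas you take $d=0$ and absorb $d=1$ into the inductive step; your added verification that the chain condition on the parameters descends to $M/x_1M$ is a detail the paper leaves implicit.
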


\begin{proof} We prove the lemma by induction on $d.$ Let $d=1$. Then the left hand side is $\Att_R(H^0_{\m}(M))$ and the right hand side is $(\Ass_RM)_0.$ So the result is clear. Let $d>1.$  Set $M_1=M/x_1M.$ Then we have by Lemma \ref{L:2} and by induction that
\begin{align}\bigcup_{i=0}^{d-1}\Att_R(H^i_{\m}(M))&\subseteq \bigcup_{i=0}^{d-2}\Att_R(H^i_{\m}(M_1))\cup (\Ass_RM)_{d-1}\notag\\
&\subseteq \bigcup_{i=0}^{d-2}\Big(\Ass_R(M_1/(x_2,\ldots ,x_{i+1})M_1)\Big)_{d-i-2}\cup (\Ass_RM)_{d-1}\notag\\
&= \bigcup_{i=1}^{d-1}\Big(\Ass_R(M/(x_1,\ldots ,x_i)M)\Big)_{d-i-1}\cup (\Ass_RM)_{d-1}\notag\\
&=\bigcup_{i=0}^{d-1}\Big(\Ass_R(M/(x_1,\ldots ,x_i)M)\Big)_{d-i-1}.\notag\end{align}
\end{proof}

 It is  known that $\Ass_R(M_{\p})=\{\q R_{\p}\mid \frak q\in\Ass_RM, \q\subseteq \p\}$ for every prime ideal $\p$ of $R$. However, such an analogous relation between the sets $\Att_{R_{\p}}(H^{i-\dim (R/\p)}_{\p R_{\p}}(M_{\p}))$ and  $\Att_R(H^i_{\m}(M))$  is not true in general, cf. \cite[Example 11.3.14]{BS}. We have the following result which is called {\it the shifted localization principle} for local cohomology modules, cf. \cite[11.3.2]{BS}, \cite[Theorem 3.7]{Sh}.

 \begin{lemma} \label {L:4}   Suppose that $R$ is a quotient of a Gorenstein local ring. Then for any prime ideal $\p$ of $R$ and any  integer $i\geq 0$ we have
$$\Att_{R_{\p}}\big(H^{i-\dim (R/\p)}_{\p R_{\p}}(M_{\p})\big)=\{\q R_{\p}\mid \q\in\Att_R(H^i_{\m}(M)), \q\subseteq \p\}.$$ 
\end{lemma}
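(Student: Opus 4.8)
The plan is to reduce to the case where $R$ is Gorenstein and then invoke local duality. First I would write $R = R'/I$ where $(R',\m')$ is a Gorenstein local ring of dimension $n$, and observe that it suffices to prove the statement over $R'$; indeed, local cohomology, attached primes, localization and the dimension shift $\dim(R/\p)$ are all insensitive to the quotient presentation in the usual way (contraction of primes along $R' \to R$ is a bijection on $\Var(I)$, and $H^i_{\m}(M) = H^i_{\m'}(M)$ for an $R$-module $M$ viewed over $R'$, since $\Ann_{R'}M \supseteq I$ forces the same section functor). So assume henceforth $R$ is Gorenstein of dimension $n$.

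The key tool is local duality: for a finitely generated $R$-module $M$ and $j \geq 0$ one has an isomorphism $H^j_{\m}(M)^{\vee} \cong \Ext^{n-j}_R(M,R)$ of $R$-modules, where $(-)^{\vee}$ is Matlis duality over $\R$; consequently, by Matlis duality over an Artinian module, $\Att_R(H^j_{\m}(M)) = \Ass_R(\Ext^{n-j}_R(M,R))$. The analogous statement holds verbatim after localizing at $\p$: $R_{\p}$ is Gorenstein of dimension $\operatorname{ht}\p = n - \dim(R/\p)$, so
$$\Att_{R_{\p}}\big(H^{i-\dim(R/\p)}_{\p R_{\p}}(M_{\p})\big) = \Ass_{R_{\p}}\big(\Ext^{\,n-i}_{R_{\p}}(M_{\p},R_{\p})\big).$$
Now I would use that $\Ext$ commutes with localization for finitely generated modules over a Noetherian ring, so the right-hand side equals $\Ass_{R_{\p}}\big((\Ext^{n-i}_R(M,R))_{\p}\big)$. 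Finally, the classical fact that associated primes localize,
$$\Ass_{R_{\p}}(N_{\p}) = \{\q R_{\p} \mid \q \in \Ass_R N,\ \q \subseteq \p\},$$
applied to $N = \Ext^{n-i}_R(M,R)$, together with the duality identification $\Ass_R N = \Att_R(H^i_{\m}(M))$, yields exactly the asserted equality. Thus the whole argument is a chain of standard identities once the Gorenstein reduction is in place.

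The main obstacle — really the only nontrivial point — is the bookkeeping in the reduction to the Gorenstein case, in particular making sure the index and the dimension-shift match up. One must check that the cohomological degree $n-i$ appearing over $R'$ is consistent with the degree $n - i$ needed over $R$ after accounting for $\dim R'$ versus $\dim R$, and that passing to $R'_{\p'}$ (where $\p' \in \Var(I)$ contracts to $\p$) reproduces exactly the shift $\dim(R/\p)$ rather than $\dim(R'/\p')$. The cleanest way to handle this is to note $\dim(R'/\p') = \dim(R/\p)$ since $R = R'/I$ and $\p'/I = \p$, and that $\operatorname{ht}_{R'}\p' = n - \dim(R'/\p')$ because $R'$ is catenary and equidimensional (being Gorenstein local), so everything matches. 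After that, the only routine verifications are the well-known compatibilities (local cohomology base change along $R' \to R$, Matlis duality over $\R$ versus over the complete Gorenstein ring, and localization of $\Ext$), which I would cite rather than reprove.
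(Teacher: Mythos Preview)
Your argument is correct and is essentially the standard proof via local duality: reduce to a Gorenstein ambient ring, use local duality to identify $\Att_R(H^j_{\m}(M))$ with $\Ass_R(\Ext^{n-j}_R(M,R))$, localize $\Ext$, and apply the well-known behaviour of $\Ass$ under localization. The index check $\operatorname{ht}_{R'}\p' = n - \dim(R/\p)$ using that a Gorenstein local ring is Cohen--Macaulay (hence catenary and equidimensional) is exactly the right way to line up the shift.

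However, note that the paper does \emph{not} prove this lemma at all: it is simply quoted from Sharp \cite[Theorem~3.7]{Sh} and \cite[11.3.2]{BS} as a known result, with no argument given. So there is nothing to compare your approach against in the paper itself. What you have written is precisely the argument behind those cited references, so your proposal is fine as a self-contained proof, but for the purposes of this paper a one-line citation would suffice.
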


In general, we have
$$\Att_{R_{\p}}\big(H^{i-\dim (R/\p)}_{\p R_{\p}}(M_{\p})\big)\subseteq \{\q R_{\p}\mid \q\in\Att_R(H^i_{\m}(M)), \q\subseteq \p\}$$ 
 for any prime ideal $\p$ of $R$ and any integer $i\geq 0$. This later inclusion is called  the {\it weak general shifted localization princile}, cf. \cite[11.3.8]{BS}.

Now we present the  main result of this paper.

\begin{theorem} \label{T:1a} The following statements are equivalent:
\begin{enumerate}[{(i)}]\rm
\item {\it $R$ is universally catenary and all its formal fibers are Cohen-Macaulay;}
\item {\it $\displaystyle  \Att_{R_{\p}}\big(H^{i-\dim (R/\p)}_{\p R_{\p}}(M_{\p})\big)=\big\{\q R_{\p}\mid \q\in\Att_R(H^i_{\m}(M)), \q\subseteq \p\big\}$ for every finitely generated $R$-module $M$, integer $i\geq 0$ and prime ideal $\p$ of $R$;}
\item {\it $\displaystyle\Att_{\R}(H^i_{\m}(M))=\bigcup_{\p\in\Att_R(H^i_{\m}(M))}\Ass_{\R}(\R/\p\R)$ for every finitely generated $R$-module $M$ and  integer $i\geq 0$.}
  \end{enumerate}
\end{theorem}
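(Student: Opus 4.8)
\textbf{Proof plan for Theorem \ref{T:1a}.}

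The plan is to prove the cycle of implications (i)$\Rightarrow$(ii)$\Rightarrow$(iii)$\Rightarrow$(i), using Lemma \ref{L:3} as the crucial technical input together with standard properties of local cohomology over rings that are quotients of Gorenstein (equivalently, Cohen-Macaulay) local rings. For the implication (i)$\Rightarrow$(ii), the key point is Kawasaki's theorem \cite[Corollary 1.2]{Kaw}: condition (i) is equivalent to $R$ being a quotient of a Cohen-Macaulay (hence, after going to a larger ring, Gorenstein) local ring, so (ii) follows directly from the shifted localization principle, Lemma \ref{L:4}. This first step should be essentially immediate.

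For (ii)$\Rightarrow$(iii), I would argue as follows. The inclusion $\bigcup_{\p\in\Att_R(H^i_{\m}(M))}\Ass_{\R}(\R/\p\R)\subseteq\Att_{\R}(H^i_{\m}(M))$ is in fact always true: if $\p\in\Att_R(H^i_{\m}(M))$, then by Lemma \ref{L:2a} there is $\frak P\in\Att_{\R}(H^i_{\m}(M))$ with $\frak P\cap R=\p$, and one then checks (using that $\R$ is flat over $R$ and that $\Att_{\R}(H^i_{\m}(M))$ is a closed-under-specialization-within-its-minimal-elements type set, or more cleanly using Lemma \ref{L:1a}(ii) and the fact that minimal primes of $\Ann_{\R}H^i_{\m}(M)$ over $\p\R$ are attached) that every $\frak Q\in\Ass_{\R}(\R/\p\R)$ lies in $\Att_{\R}(H^i_{\m}(M))$; this uses that $\R$ is faithfully flat over $R$ so $\Ass_{\R}(\R/\p\R)$ consists of minimal primes of $\p\R$. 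For the reverse inclusion, take $\frak P\in\Att_{\R}(H^i_{\m}(M))$ and set $\p=\frak P\cap R$, which lies in $\Att_R(H^i_{\m}(M))$ by Lemma \ref{L:2a}; I must show $\frak P\in\Ass_{\R}(\R/\p\R)$, i.e. $\frak P$ is a minimal prime of $\p\R$. Here is where (ii) enters: localize at $\frak P$ over $\R$ and use that $(\widehat R)_{\frak P}$ is flat over $R_{\p}$ with closed fibre of dimension $\dim(\widehat R/\p\R)_{\frak P}=\docao_{\widehat R/\p\R}(\frak P)$; applying the shifted localization principle for $\R$ (which \emph{is} a quotient of a Gorenstein ring) and comparing with (ii) for $R$ via Lemma \ref{L:3a} forces this fibre dimension to be $0$, which is exactly the statement that $\frak P$ is minimal over $\p\R$. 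I expect this to require a careful dimension bookkeeping but no new ideas.

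The implication (iii)$\Rightarrow$(i) is where the real content of Lemma \ref{L:3} is used, and I expect it to be the main obstacle. I would argue contrapositively, or rather directly: assume (iii) and show the formal fibers of $R$ are Cohen-Macaulay and $R$ is universally catenary. The strategy is to produce, for a given $i<d$ and a given $\p\in\Att_R(H^i_{\m}(M))$ for suitable $M$, a finitely generated $R$-module $N$ with $\p\in\Ass_R N$ and $\dim(R/\p)$ prescribed; this is precisely the role of Lemma \ref{L:3}, after first passing to a system of parameters $(x_1,\ldots,x_d)$ of $M$ with $x_i\in\frak a(M/(x_1,\ldots,x_{i-1})M)^3$, whose existence is standard. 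Then for such $N$ one has $\Ass_{\R}(\R/\p\R)\subseteq\Ass_{\R}\widehat N$, and since $\Ass_{\R}\widehat N=\bigcup_{\q\in\Ass_R N}\Ass_{\R}(\R/\q\R)$ one obtains control on the fibers $\R/\p\R$. To get Cohen-Macaulayness of the formal fibers, I would take $\p=\m$ is not useful, so instead I would run the argument for varying $\p$: the point is that (iii), applied to all $M$ and all $i$, combined with Lemma \ref{L:3}, says that every prime in a certain rich family of associated primes of quotients $M/(x_1,\ldots,x_i)M$ has the property that $\R/\p\R$ is unmixed of the right dimension; one then invokes a criterion (e.g. the one behind \cite[Corollary 1.2]{Kaw} or the characterization via unmixedness of formal fibers, cf. the argument in \cite{CN}) that unmixedness of $\R/\p\R$ for all $\p$ in the closed set $\Att_R H^{i}_{\m}(R/\p)$-type sets is equivalent to (i). Concretely: if some formal fiber of $R$ were not Cohen-Macaulay, or $R$ were not universally catenary, one could, using the non-catenary or non-CM behaviour, build an $R$-module $M$, an integer $i$ and an attached prime $\frak P$ of $H^i_{\m}(M)$ over $\R$ lying over a $\p\in\Att_R(H^i_{\m}(M))$ but with $\frak P\notin\Ass_{\R}(\R/\p\R)$, contradicting (iii); the construction of this counterexample module — essentially reversing \cite[Example 2.3]{CN} in the presence of bad formal fibers — is the delicate part, and I would lean on the fact that the failure of the annihilator $\Ann_{\R}H^i_{\m}(M)$ to be extended from $R$ (which is governed by non-Cohen-Macaulay formal fibers) is detected by attached primes, together with Lemma \ref{L:1a}(ii).
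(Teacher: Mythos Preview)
Your plan contains two concrete errors and misplaces the role of Lemma~\ref{L:3}.

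\textbf{(i)$\Rightarrow$(ii).} The shortcut via Kawasaki fails: condition (i) is equivalent to $R$ being a quotient of a Cohen--Macaulay local ring, but this does \emph{not} force $R$ to be a quotient of a Gorenstein local ring. The latter is equivalent to $R$ admitting a dualizing complex and implies that the formal fibers are Gorenstein, not merely Cohen--Macaulay; a Cohen--Macaulay local ring without canonical module already satisfies (i) yet is not a homomorphic image of any Gorenstein local ring. Hence Lemma~\ref{L:4} is not directly available over $R$. The paper instead applies Lemma~\ref{L:4} over $\R$ (which \emph{is} a quotient of a regular local ring) and descends via Lemma~\ref{L:3a}; but applying Lemma~\ref{L:3a} to the map $R_{\q}\to\R_{\frak Q}$ requires $\dim(\R_{\frak Q}/\q\R_{\frak Q})=0$, and for this one must know $\frak Q\in\Ass_{\R}(\R/\q\R)$ together with the unmixedness of $R/\q$ guaranteed by (i). Obtaining $\frak Q\in\Ass_{\R}(\R/\q\R)$ from $\frak Q\in\Att_{\R}(H^i_{\m}(M))$ is exactly where Lemma~\ref{L:3} enters, after \cite[Corollary~4.2(i)]{CNN} supplies the required system of parameters under hypothesis (i).

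\textbf{(ii)$\Rightarrow$(iii).} The inclusion $\supseteq$ is \emph{not} true unconditionally. Your assertion that $\Ass_{\R}(\R/\p\R)$ consists of minimal primes of $\p\R$ is false whenever $\widehat{R/\p}$ has embedded primes, which is precisely the phenomenon excluded by (i). Concretely, if $\p\in(\Ass_RM)_d=\Att_R(H^d_{\m}(M))$ and $\frak P\in\Ass_{\R}(\R/\p\R)$ is embedded with $\dim(\R/\frak P)<d$, then $\frak P\notin(\Ass_{\R}\widehat M)_d=\Att_{\R}(H^d_{\m}(M))$. The paper uses (ii), applied to the module $R/\p$, to first establish $\dim(\R/\frak P)=\dim(R/\p)$ for every $\frak P\in\Ass_{\R}(\R/\p\R)$, and only then runs the flat--base--change argument. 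For $\subseteq$, your localization scheme via Lemma~\ref{L:3a} is circular: the hypothesis $\dim(S/\m S)=0$ of that lemma, for $S=\R_{\frak P}$ over $R_{\p}$, is the very statement that $\frak P$ is minimal over $\p\R$. The paper instead proves $\subseteq$ by again invoking Lemma~\ref{L:3}, after using (ii) to verify $\dim(R/\frak a_i(M))\le i$ so that the special system of parameters exists.

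\textbf{(iii)$\Rightarrow$(i).} Lemma~\ref{L:3} is not used here at all. The paper applies (iii) to the modules $M=R/\p$ and combines it with Sharp's inequality $\dim(\R/\frak Q)\le i$ for $\frak Q\in\Att_{\R}(H^i_{\m\R}(\R/\p\R))$ to deduce $\dim(R/\frak a_i(R/\p))\le i$ for all $i<\dim(R/\p)$. This yields a uniform local cohomological annihilator for every $R/\p$, and (i) then follows from \cite[Corollary~4.3]{DJ}. No contrapositive counterexample construction is needed, and the vague plan you sketch does not obviously produce one.
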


\begin{proof}  Let $i\geq 0$ be an integer. Firstly we claim that if  there exists a system of parameters $(x_1,\ldots , x_d)$ of $M$ such that $x_k\in\frak a(M/(x_1,\ldots ,x_{k-1})M)^3$ for all $k=1,\ldots , d$ then 
$$\displaystyle \Att_{\R}(H^i_{\m}(M))\subseteq \bigcup_{\p\in\Att_R(H^i_{\m}(M))}\Ass_{\R}(\R/\p\R).$$ 
In fact,  let $\frak P\in \Att_{\R}(H^i_{\m}(M)).$  If $i=d$ then $\frak P\in (\Ass_{\R}\widehat M)_d$ by \cite[Theorem 7.3.2]{BS}. Therefore we get by  \cite[Theorem 23.2]{Mat} that
$$\frak P\in \bigcup_{\p\in (\Ass_RM)_d}\Ass (\R/\p\R)=\bigcup_{\p\in \Att_R(H^d_{\m}(M))}\Ass (\R/\p\R).$$ So, the result is true in this case. Suppose that $i<d.$  It is clear that $$\frak a(M/(x_1,\ldots ,x_{k-1})M)\R\subseteq \frak a(\widehat M/(x_1,\ldots ,x_{k-1})\widehat M)$$ for all $k=1,\ldots ,d.$ Set $\dim (\R/\frak P)=t$. Then $\frak P\in \big(\Att_{\R}(H^i_{\m}(M))\big)_t$. Since $i<d$,   we get  by Lemma \ref{L:3} that $\frak P\in \big(\Ass_{\R}(\widehat M/(x_1,\ldots ,x_{d-t-1})\widehat M)\big)_t$.  Set $\p_0=\frak P\cap R.$ Then we have $\p_0\in\Att_R(H^i_{\m}(M))$  by  Lemma \ref{L:2a}  and $\p_0\in\Ass_R(M/(x_1,\ldots ,x_{d-t-1})M)$. Therefore, it follows by \cite[Theorem 23.2]{Mat} that 
$$\frak P\in \Ass_{\R}(\widehat M/(x_1,\ldots ,x_{d-t-1})\widehat M)=\underset{\p\in\Ass_R(M/(x_1,\ldots ,x_{d-t-1})M)}{\bigcup}\Ass (\R/\p\R).$$  Hence $\frak P\in\Ass (\R/\p_0\R).$ Thus, the claim is proved.

Now we prove  (i) $\Rightarrow$ (ii).  Let $i\geq 0$ be an integer and let $\p$ be a prime ideal of $R$. By the weak general localization principle \cite[11.3.8]{BS}, it is enough to show that if $\q\in\Att_R(H^i_{\m}(M))$ such that $\q\subseteq \p$ then $\q R_{\p}\in \Att_{R_{\p}}\big(H^{i-\dim (R/\p)}_{\p R_{\p}}(M_{\p})\big)$. In fact,  there exists by Lemma \ref{L:2a}  a prime ideal $\frak Q\in \Att_{\R}(H^i_{\m}(M))$ such that $\frak Q\cap R=\q.$  Since $R$ is universally catenary and all its formal fibers are Cohen-Macaulay, we have $\dim (R/\frak a_j(M))\leqslant i$ for all $j=0,\ldots ,d-1$, cf. \cite[Corollary 4.2(i)]{CNN}. Hence $\dim (R/\frak a(M))<d.$  Therefore there exists  an element $x_1\in\frak a(M)^3$ which is a parameter element of $M$. By similar reasons,  we can choose a system of parameters $(x_1,\ldots , x_d)$ of $M$ such that $x_k\in\frak a(M/(x_1,\ldots ,x_{k-1})M)^3$ for all $k=1,\ldots , d.$ So, we get by the above claim that $\displaystyle \frak Q\in\Ass_{\R}(\R/\q\R).$ Note that $R/\q$ is unmixed by  the hypothesis (i), therefore $\dim (\R/\frak Q)=\dim (R/\q).$ Since $\frak Q\in \Att_{\R}H^i_{\m}(M)=\Att_{\R}H^i_{\m\R}(\widehat M)$, we get by Lemma \ref{L:4} that $\frak Q \R_{\frak Q}\in \Att_{\R_{\frak Q}}H^{i-\dim (\R/\frak Q)}_{\frak Q\R_{\frak Q}}(\widehat M_{\frak Q})$. Note that the natural map $R_{\q}\rightarrow \R_{\frak Q}$ is faithfully flat and  $\dim (\R_{\frak Q}/\q \R_{\frak Q})=0$. Moreover, we get by Flat Base Change Theorem \cite[4.3.2]{BS} that
$$ H^{i-\dim (R/\q)}_{\q R_{\q}}(M_{\q})\otimes \R_{\frak Q}\cong H^{i-\dim (\R/\frak Q)}_{\frak Q\R_{\frak Q}}(\widehat M_{\frak Q}).$$  Therefore $\q R_{\q}\in \Att_{R_{\q}}\big(H^{i-\dim (R/\q)}_{\q R_{\q}}(M_{\q})\big)$  by Lemma \ref{L:3a}.  Because $R$ is catenary by the assumption (i), we have $$i-\dim (R/\q)=(i-\dim (R/\p))-\dim (R_{\p}/\q R_{\p}).$$ Therefore, from the fact that $(R_{\p})_{\q R_{\p}}\cong R_{\q},$   we get $\q R_{\p}\in \Att_{R_{\p}}\big(H^{i-\dim (R/\p)}_{\p R_{\p}}(M_{\p})\big)$ by  the weak general shifted localization principle \cite[11.3.8]{BS}.

 (ii) $\Rightarrow$ (iii). Let $\p\in\Att_R(H^i_{\m}(M))$ and $\frak P\in \Ass (\R/\p\R)$. Firstly we  show that $\dim(\R/\mathfrak{P})=\dim(R/\p).$  In fact, suppose that $\dim(\R/\mathfrak{P})<\dim(R/\p).$   Set $k=\dim(\R/\mathfrak{P})$. Then we have by \cite[11.3.3]{BS} that  $\mathfrak{P}\in\Att_{\R}(H^k_{\m\R}(\R/\p\R))=\Att_{\R}(H^k_{\m}(R/\p)).$  Because $\frak P\in \Ass (\R/\p\R)$, we have  $\p=\frak P\cap R\in\Att_R(H^k_{\m}(R/\p))$ by Lemma \ref{L:2a}.  Therefore by the hypothesis (ii) we have  $\p R_{\p}\in\Att_{R_{\p}}\big(H^{k-\dim (R/\p)}_{\p R_{\p}}(R_{\p}/\p R_{\p})\big)$. However, as $\dim (R/\p)>k,$ we have  $\Att_{R_{\p}}\big(H^{k-\dim (R/\p)}_{\p R_{\p}}(R_{\p}/\p R_{\p})\big)=\emptyset$, this is a contradiction. So, $\dim(\R/\mathfrak{P})=\dim(R/\p).$ 

Next, we have  $\dim (\R_{\frak P}/\p \R_{\frak P})=0$  by the above fact. As  $\p\in\Att_R(H^i_{\m}(M))$, we get  by the hypothesis (ii) that $\p R_{\p}\in\Att_{R_{\p}}\big(H^{i-\dim (R/\p)}_{\p R_{\p}}(M_{\p})\big)$.  Note that the natural map $R_{\p}\rightarrow \R_{\frak P}$ is faithfully flat    and 
$$ H^{i-\dim (R/\p)}_{\p R_{\p}}(M_{\p})\otimes \R_{\frak P}\cong H^{i-\dim (\R/\frak P)}_{\frak P\R_{\frak P}}(\widehat M_{\frak P}).$$  Therefore $\frak P \R_{\frak P}\in \Att_{\R_{\frak P}}\big(H^{i-\dim (\R/\frak P)}_{\frak P \R_{\frak P}}(\widehat M_{\frak P})\big)$  by Lemma \ref{L:3a}.  Hence $\frak P\in \Att_{\R}(H^i_{\m\R}(\widehat M))$  by the weak general shifted localization principle, and hence $\frak P\in\Att_{\R}(H^i_{\m}(M))$. Thus,
$$\displaystyle \Att_{\R}(H^i_{\m}(M))\supseteq \bigcup_{\p\in\Att_R(H^i_{\m}(M))}\Ass_{\R}(\R/\p\R).$$

Now we prove the converse inclusion. For each $i\in\{0,\ldots , d-1\}$ with $H^i_{\m}(M)\neq 0,$ there exists by Lemma \ref{L:1a} a prime ideal  $\p\in\Att_R(H^i_{\m}(M))$ such that $\dim (R/\p)=\dim (R/\frak a_i(M)).$ Then $\p R_{\p}\in \Att_{R_{\p}}\big(H^{i-\dim (R/\p)}_{\p R_{\p}}(M_{\p})\big)$ by the assumption (ii). Hence  $H^{i-\dim (R/\p)}_{\p R_{\p}}(M_{\p})\neq 0$ by Lemma \ref{L:1a}(i). So, we have $i\geq \dim (R/\p).$  Hence $\dim (R/\frak a_i(M))\leqslant i$. It follows that $\dim (R/\frak a(M))<d.$ Therefore there exists  an element $x_1\in\frak a(M)^3$ which is a parameter element of $M$. By similar reasons, there exists a system of parameters $(x_1,\ldots , x_d)$ of $M$ such that $x_k\in\frak a(M/(x_1,\ldots ,x_{k-1})M)^3$ for all $k=1,\ldots , d.$ So, by the above claim we have
$$\displaystyle \Att_{\R}(H^i_{\m}(M))\subseteq \bigcup_{\p\in\Att_R(H^i_{\m}(M))}\Ass_{\R}(\R/\p\R).$$

  (iii) $\Rightarrow$ (i).  Let $\p\in\Spec(R)$. Set $t=\dim(R/\p)$ and $\frak a_i(R/\p)=\Ann_R(H^i_{\m}(R/\p))$ for $i=0,1,\ldots, t-1$. As usual we set $\frak a(R/\p) =\frak a_0(R/\p)\ldots \frak a_{t-1}(R/\p).$ Then there exists by Lemma \ref{L:1a}(ii) a prime ideal $\q\in\Att_R(H^i_{\m}(R/\p))$ such that $\dim (R/\q)=\dim (R/\frak a_i(R/\p)).$ Let $\frak Q\in\Ass (\R/\q\R)$ such that $\dim (\R/\frak Q)=\dim (R/\q).$ Then we have by the hypothesis (iii) that $\frak Q\in\Att_{\R}(H^i_{\m}(R/\p)).$  Hence $\frak Q\in\Att_{\R}(H^i_{\m\R}(\R/\p\R)).$ Therefore we get by \cite[Proposition 3.8]{Sh} that $\dim (\R/\frak Q)\leqslant i$, see also \cite{Sch}.  So, $\dim(R/\frak a_i(R/\p))\leqslant i$ for every integer $i=0,\ldots ,t-1$. Hence $\dim(R/\frak a(R/\p))< t$ and hence $\frak a(R/\p)\not\subseteq\p.$ So,  there exists $x\in\frak a(R/\p)\setminus \p.$ Hence $x$ is a parameter element of $R/\p$ and $xH^i_{\m}(R/\p)=0$ for all $i<t.$ It means that $R/\p$ has a uniform local cohomological annihilator, cf. \cite{HH}.  Thus, we have by \cite[ Corollary 4.3]{DJ} that $R$ is universally catenary and all formal fibers of $R$ are Cohen-Macaulay.
 \end{proof}

\end{document}